\theoremstyle{plain}
\newtheorem{thm}{Theorem}[section]
\newtheorem{lem}[thm]{Lemma}
\newtheorem{prop}[thm]{Proposition}
\theoremstyle{definition}
\theoremstyle{remark}
\def\Rde{{\mathbb{R}^d}}
\def\Ne{{\mathbb{N}}}
\def\Re{{\mathbb R}}
\def\dd{\text{\rm\,d}}
\def\al{\alpha}
\def\be{\beta}
\def\sg{\sigma}
\def\f{\varphi}
\def\Om{\Omega}
\def\om{\omega}
\def\spt{\operatorname{supp}}  %% support
\def\sT{\,;\;}
\numberwithin{equation}{section}
\begin{document}

\title{Existence of an extremal function
of Sobolev critical embedding with an $\al$-homogeneous weight}

\renewcommand*{\urladdrname}{\itshape ORCID}

\author[P. Gurka]{Petr Gurka}
\address{P. Gurka, Department of Mathematics,
Czech University of Life Sciences Prague, 165 21, Prague 6, Czech Republic;
Department of Mathematics, College of Polytechnics Jihlava, Tolstého 16, 586 01, Jihlava, Czech Republic;
Department of Mathematical Analysis, Faculty of Mathematics and Physics, Charles University, Sokolovská 83, 186 75 Praha 8, Czech Republic}
\email{gurka@tf.czu.cz}
\urladdr{\href{https://orcid.org/0000-0002-0995-4711}{0000-0002-0995-4711}}

\thanks{The research of the first author was supported by grant No.~GA23-04720S
 of the Czech Science Foundation; the second author's research was partially supported by the two
 Australian Research Council grant DP200101065 and DP220100067.}

\author[D. Hauer]{Daniel Hauer}
\address{D. Hauer, Brandenburg University of Technology Cottbus–Senftenberg, Platz der Deutschen Einheit 1,
  03046 Cottbus, Germany;
  School of Mathematics and Statistics, The
  University of Sydney, Sydney, NSW, 2006, Australia}
\email{daniel.hauer@b-tu.de}
\urladdr{\href{https://orcid.org/0000-0001-6210-7389}{0000-0001-6210-7389}}

\subjclass[2020]{46E35, 46E30, 35A23, 26D10}
\keywords{Trudinger-Moser inequality, Moser constant, critical Sobolev emebdding, Orlicz exponential space, $\alpha$-homogeneous weight,
monomial weight, extremal function}

%\date{\today, \currenttime \;(Praha, CZ)\quad file: \textcolor{blue}{{\tt 24Extremal01}}}

\begin{abstract}
In our previous publication [{\em Calc. Var. Partial Differential Equations}, 60(1):Paper No. 16,
  27, 2021], we delved into examining a critical
Sobolev-type embedding of a Sobolev weighted space into an exponential
weighted Orlicz space. We specifically determined the optimal Moser-type
constant for this embedding, utilizing the monomial weight introduced
by Cabr\'e and Ros-Oton [{\em J. Differential Equations}, 255(11):4312--4336, 2013].
Towards the conclusion of that paper,
we pledged to explore the existence of an extremal function within this framework.
In this current work, we not only provide a positive affirmation to this inquiry
but extend it to a broader range of weights known as \emph{$\al$-homogeneous weights}.
\end{abstract}

\maketitle

\section{Introduction}

It is well-known that Trudinger-Moser type inequalities can be considered as the critical case
of Sobolev embeddings. To be more precise, recall that the Sobolev space
$W^{1,d}(\Rde)$, $d\in\Ne$,
is embedded into the Lebesgue space
$L^q(\Rde)$ for any $q\in[d,\infty)$, although
$W^{1,d}(\Rde)$ is not embedded into $L^\infty(\Rde)$.
It was proved in 1967 by Trudinger~\cite{Tr} that there is an embedding of this type of
$W^{1,d}_0(\Om)$ into the Orlicz space $L^\Phi(\Om)$, with $\Om$
being a bounded domain in $\Rde$,
and the Young function $\Phi(x)=\exp\big(\alpha|x|^{d/(d-1)}\big)$, $t>0$,
where $\al$ is a positive constant. However, this embedding was announced
earlier by Yudovich~\cite{Yu} without a proof and,
in a slightly weaker form, proved by Pokhozhaev~\cite{Po}.
In 1971 Moser~\cite{Moser71} showed that if $\al\le\al_d=d\om_{d-1}^{1/(d-1)}$,
where $\om_{d-1}$ denotes the surface area of the unit sphere in $\Rde$, then
\begin{equation*}
  \sup_u\int_{\Om}\exp\big(\alpha|x|^{d/(d-1)}\big)\dd x<\infty,
\end{equation*}
where the supremum is taken over all functions $u$ from the unit ball of the space $W^{1,d}_0(\Om)$.
When $\al>\al_d$, this supremum becomes infinite.

A natural question arises as to whether there exists a function $u$  such that
the supremum is attained. The first step towards solving this problem was made in 1986
by Carleson and Chang~\cite{CaCh}, who showed that an extremal function $u$  exists
if $\Om$ is a ball in $\Rde$. In 1988, Struwe~\cite{MR970849} demonstrated that this result
continues to hold for small perturbations of the ball in $\Re^2$, and in 1992, Flucher~\cite{Flu}
extended the result to any bounded domain in $\Re^2$. Finally, in 1995, Lin~\cite{Lin}
generalised the result to any dimension.
Note that a key ingredient of his proof is the use of $d$-Green's functions,
the singular solutions to the $d$-Laplacian.

In our previous paper, published in 2021~\cite{GuHa21}, we studied a weighted version of the
Trudinger-Moser inequality and its relationship with the corresponding Sobolev-type embedding.
This research was based on recent results by Cabr\'e\ and Ros-Oton~\cite{CaRoOt13} and Lam~\cite{Lam17},
whose function spaces involved the so-called \emph{monomial} weight. In our paper,
we presented some weighted analogues of the classical results.
Namely, we found the ``optimal'' Orlicz spaces (within the class of Orlicz spaces) of the corresponding
critical Sobolev-type embedding and showed that this embedding is not compact. Moreover, we proved,
that an embedding into any properly larger Orlicz space is compact, and we also derived
a corresponding \emph{concentrated compactness} principle of P.-L. Lions.
However, we left the question of the existence of an extremal function unsolved.
In this paper, we aim to address this gap, at least for the case where $\Om$ is a ball in $\Rde$.
We are working in a slightly more general context, as our weight is a \emph{homogeneous} weight
(more details about the weights can be found in Section~\ref{Prelim}).

\section{Preliminaries} \label{Prelim}

\subsection*{Weighted measure on an convex cone}
We denote by $\Sigma$  an open convex cone in $\Rde$, $d\in\Ne$, $d\geq2$, with vertex at the origin.
Following \cite{CRoS16}, $w\colon \overline{\Sigma} \to [0, \infty)$ is a nonnegative (not identically zero)
continuous function that is \emph{$\al$-homogeneous}, $\al>0$,
and such that $w^{1/\alpha}$ is concave in $\Sigma$. Recall that $w$ is $\al$-ho\-mo\-geneous if
\begin{equation} \label{Wfunc}
  w(\kappa x)=\kappa^\al w(x)\quad\text{for any $x\in\overline{\Sigma}$ and all $\kappa>0$}.
\end{equation}
Throughout the paper $\mu$ denotes the weighted measure on $\Sigma$ defined as
\begin{equation*}
  \dd\mu(x)=w(x) \dd x.
\end{equation*}
For future reference, we set
\begin{equation}\label{NumD}
  D=d+\al.
\end{equation}
Important examples of $\al$-homogeneous functions
are the \emph{monomial weights}. Namely, let
$\Sigma = \Sigma_1 \cap \cdots \cap \Sigma_k$, $k\in\{1, \dots, d\}$,
where $\Sigma_j\neq\Rde$, $j=1, \dots, k$, are open convex cones
in $\Rde$ with vertex at the origin.
Given $A_1, \dots, A_k>0$, the
weight $w\colon \overline{\Sigma} \to [0, \infty)$ defined as
\begin{equation*}
w(x) = \prod_{j = 1}^k \operatorname{dist}(x, \partial \Sigma_j)^{A_j},\ x\in \overline{\Sigma},
\end{equation*}
satisfies the $\al$-homogeneity assumptions. For other interesting examples we refer the reader
to \cite[Section~2]{CRoS16}.
In particular, when
$\Sigma_j = \{x\in\Rde\sT x_j > 0\}$, $j=1, \dots, k$,
the function (cf. also \cite{CaRoOt13})
\begin{equation*}
w(x) = x_1^{A_1}\cdots x_k^{A_k},\ x\in \overline{\Sigma},
\end{equation*}
we considered in our previous paper \cite{GuHa21}.

\subsection*{Basic definitions}

\paragraph{\it Measure space.}\quad
By the symbol $(X,\nu)$ we denote a measure space~$X$ with a nonnegative $\sigma$-finite measure~$\nu$.
If $\Om$ is a $\nu$-measurable subset of $X$, we denote by $\nu(\Om)$  the $\nu$-measure of $\Om$, that is,
$\nu(\Om)=\int_\Om\dd\nu(x)$.
If $X=\Rde$ with  its $n$-dimensional Lebesgue measure $\dd\nu(x)=\dd x$ and
$M\subset\Rde$ is a Lebesgue measurable set in $\Rde$, we use the notation
$|M|=\int_\Om\dd x$.

\paragraph{\it Lebesgue space.}\quad
If $(X,\nu)$ is a measure space,
we denote by $L^p(\Om,\nu)$, $p\in[1,\infty]$, the \emph{Lebesgue space},
of all  $\nu$-measurable functions~$f$
on a $\nu$-measurable set $\Om$ in $X$, equipped with the norm
\begin{equation*}
  \|f\|_{p,\Om,\nu}=
  \bigg\{
    \begin{array}{ll} \vspace{3pt}
      \big(\int_\Om |f(x)|^p\dd \nu(x)\big)^{1/p} & \hbox{if $p<\infty$,} \\
      \text{{$\nu$}-ess sup\,}_{x\in\Om}|f(x)| & \hbox{if $p=\infty$.}
    \end{array}
\end{equation*}
If $X=\Rde$ and $\nu$ is the $d$-dimensional Lebesgue measure,
then we write $\|\cdot\|_{p,\Om}$ instead of $\|\cdot\|_{p,\Om,\nu}$. Moreover,
we  simply write  $\|\cdot\|_{p,\nu}$ (or $\|\cdot\|_{p}$) when $\Om=X$  (or $\Om=\Rde$).
For $p\in[1,\infty]$ we define the \emph{H\"older conjugate}
 number $p'\in[1,\infty]$ by the equality
$\frac1p+\frac1{p'}=1$.

\paragraph{\it Sobolev space.}\quad
Let $\Om$ be a domain in a measure space $(\Rde,\nu)$
with a nonnegative Borel measure~$\nu$ and let $p\in[1,\infty]$.
The \emph{Sobolev space} $W^{1,p}_0(\Om,\nu)$ is defined  as
the closure of $C_c^\infty(\Om)$ (the space of infinitely
differentiable functions with compact supports in $\Om$)
with respect to the norm
\begin{equation*}%\label{Sn}
  \|u\|=\|\nabla u\|_{p,\Om,\nu},
\end{equation*}
where $\nabla u$ is the gradient of $u$ and $|\nabla u|$
is its Euclidean length, that is,
\begin{equation*}
  \nabla u=\Big(\frac{\partial u}{\partial x_1},\dots,\frac{\partial u}{\partial x_n}\Big),
  \quad
  |\nabla u|=\Big(\sum_{j=1}^{n}({\partial u}/{\partial x_j})^2\Big)^{1/2}
\end{equation*}
(to simplify the notation we are writing $\|\nabla u\|_{p,\Om,\nu}$
instead of  $\|\,|\nabla u|\,\|_{p,\Om,\nu}$).

\paragraph{\it Distribution function and rearrangements.}\quad
For a measurable function  $f$ in $\Sigma$ we define its \emph{distribution function with respect to $\mu$} as
\begin{equation*}
  f_{*\mu}(\tau)=\mu\big(\{x\in \Sigma: |f(x)|>\tau\}\big), \quad \tau>0,
\end{equation*}
and its \emph{nonincreasing rearrangement with respect to $\mu$} as
\begin{equation*}
  f^*_\mu(t)=\inf\{\tau>0: f_{*\mu}(\tau)\le t\},\quad t>0.
\end{equation*}
The function $f^\bigstar_\mu$ defined as
\begin{equation} \label{SymmRearr}
f^\bigstar_\mu(x) = f^*_\mu(C_D |x|^D),\quad x\in\Rde,
\end{equation}
where
\begin{equation}\label{unit_ball_in_sigma_measure}
C_D = \mu(B_1\cap \Sigma),
\end{equation}
is the \emph{radial rearrangement} of $f$ \emph{with respect to} $\mu$. We use the notation
\begin{equation*}
  B_r=\{x\in\Rde: |x|<r\},\quad r>0.
\end{equation*}
Thanks to the $\al$-homogeneity of $w$, we have
\begin{equation*}
\mu(B_r\cap \Sigma) = C_D r^D, \quad r>0.
\end{equation*}
Note that the function $f^\bigstar_\mu$ is nonnegative and radially nonincreasing.
Though it is defined on the whole $\Rde$, it depends only on function values of $f$ in $\Sigma$.
Furthermore, the functions $f$ and $f^\bigstar_\mu$ are equimeasurable with respect to $\mu$, that is,
\begin{equation*}
  \mu\big(\{x\in \Sigma: |f(x)|>\tau\}\big)
  =
  \mu\big(\{x\in\Sigma: |f^\bigstar_\mu(x)|>\tau\}\big), \quad \tau>0.
\end{equation*}

\subsection*{P\'olya-Szeg\"o inequality}

We need the following  important result.

\begin{thm}[{\cite[Theorem 3.1]{GLM23}}]\label{thm:PS}
Let $1\leq p < \infty$. For every function $u\in\mathcal C_c^1(\Rde)$,
its radial rearrangement $u^\bigstar_\mu$ with respect
to $\mu$ is locally absolutely continuous
and
\begin{equation}\label{thm:PS:ineq}
\|\nabla u^\bigstar_\mu\|_{p,\mu} \leq \|\nabla u\|_{p,\mu}.
\end{equation}
In particular,
$\spt u^\bigstar_\mu = \overline{B}_R$ where $R$
is such that $\mu(B_R \cap \Sigma) = \mu(\spt u \cap \Sigma)$.
\end{thm}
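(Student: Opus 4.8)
The plan is to run the classical coarea-formula argument for the P\'olya--Szeg\"o inequality, with the sharp \emph{weighted isoperimetric inequality} for convex cones \cite{CRoS16} in place of the Euclidean one; this is precisely the step that exploits the concavity of $w^{1/\al}$. Write $\mu_u(t)=\mu(\{x\in\Sigma:|u(x)|>t\})$ for the distribution function, and for a set $E$ of finite perimeter write $P_\mu(E)=\int_{\partial^*E\cap\Sigma}w\,\dd\mathcal H^{d-1}$ for the weighted perimeter of $E$ relative to $\Sigma$. Using the $\al$-homogeneity of $w$ one computes $P_\mu(B_r\cap\Sigma)=D\,C_D\,r^{D-1}=D\,C_D^{1/D}\mu(B_r\cap\Sigma)^{(D-1)/D}$, and the inequality of \cite{CRoS16} reads
\begin{equation*}
  P_\mu(E)\;\ge\;D\,C_D^{1/D}\,\mu(E\cap\Sigma)^{(D-1)/D}\;=\;P_\mu(B_r\cap\Sigma)\qquad\text{whenever }\ \mu(E\cap\Sigma)=C_Dr^D,
\end{equation*}
with equality for $E=B_r\cap\Sigma$.

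First I would record the layer-cake form of the weighted $p$-Dirichlet integral. For $u\in\mathcal C^1_c(\Rde)$ the coarea formula gives, for every nonnegative Borel function $g$,
\begin{equation*}
  \int_\Sigma g\,|\nabla u|\,\dd\mu\;=\;\int_0^\infty\Big(\int_{\{|u|=t\}\cap\Sigma}g\,w\,\dd\mathcal H^{d-1}\Big)\dd t ;
\end{equation*}
with $g=|\nabla u|^{p-1}$ this represents $\|\nabla u\|_{p,\mu}^p$, while with $g=|\nabla u|^{-1}$ (read as $0$ where $\nabla u=0$) it yields $-\mu_u'(t)\ge\int_{\{|u|=t\}\cap\Sigma}|\nabla u|^{-1}w\,\dd\mathcal H^{d-1}$ for a.e.\ $t$, with equality once the critical set is accounted for. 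For $p>1$, H\"older's inequality on the level set $\{|u|=t\}\cap\Sigma$ gives, for a.e.\ $t$,
\begin{equation*}
  P_\mu\big(\{|u|>t\}\big)\;\le\;\Big(\int_{\{|u|=t\}\cap\Sigma}|\nabla u|^{p-1}w\,\dd\mathcal H^{d-1}\Big)^{1/p}\Big(\int_{\{|u|=t\}\cap\Sigma}|\nabla u|^{-1}w\,\dd\mathcal H^{d-1}\Big)^{1/p'},
\end{equation*}
so that $\int_{\{|u|=t\}\cap\Sigma}|\nabla u|^{p-1}w\,\dd\mathcal H^{d-1}\ge P_\mu(\{|u|>t\})^p/(-\mu_u'(t))^{p-1}$. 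Plugging in the isoperimetric inequality $P_\mu(\{|u|>t\})\ge P_\mu(B_{r(t)}\cap\Sigma)$ with $C_Dr(t)^D=\mu_u(t)$, and integrating in $t$, I obtain a lower bound for $\|\nabla u\|_{p,\mu}^p$ that depends only on $\mu_u$.

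It remains to identify this lower bound with $\|\nabla u^\bigstar_\mu\|_{p,\mu}^p$. By construction $u^\bigstar_\mu$ is radial and radially nonincreasing, with $\{u^\bigstar_\mu>t\}=B_{r(t)}$, $\mu_{u^\bigstar_\mu}=\mu_u$, and $|\nabla u^\bigstar_\mu|$ constant on each sphere $\partial B_{r(t)}$. Hence for $u^\bigstar_\mu$ every inequality above is an equality --- H\"older because $|\nabla u^\bigstar_\mu|$ is constant on the level surface, the isoperimetric inequality because $B_{r(t)}\cap\Sigma$ is an equality case --- and running the computation in reverse gives
\begin{equation*}
  \|\nabla u^\bigstar_\mu\|_{p,\mu}^p\;=\;\int_0^\infty\frac{P_\mu(B_{r(t)}\cap\Sigma)^p}{(-\mu_u'(t))^{p-1}}\,\dd t\;\le\;\|\nabla u\|_{p,\mu}^p .
\end{equation*}
The case $p=1$ follows directly from the coarea formula and the isoperimetric inequality, without any use of H\"older. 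The support statement is then read off from equimeasurability, since $u^\bigstar_\mu$ vanishes exactly where $C_D|x|^D\ge\mu(\{|u|>0\}\cap\Sigma)$.

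I expect the genuine work to lie not in this chain of inequalities, which is classical, but in the regularity bookkeeping. Two points need care: the coarea identity for $-\mu_u'$ must take the critical set $\{\nabla u=0\}$ into account --- Sard's theorem is not available for merely $C^1$ maps into $\Re$ when $d\ge2$ --- and one must prove that $u^*_\mu$ (equivalently $u^\bigstar_\mu$) is locally absolutely continuous, which is not obvious a priori for a monotone rearrangement. Both are dealt with in the standard way: truncate $u$ to slabs $\{t_1<|u|<t_2\}$, establish the estimates for the truncations (for which a.e.\ level $t$ avoids the relevant part of the critical set, and the portion $\{|u|=t\}\cap\{\nabla u=0\}$ contributes $0$ to $\int|\nabla u|^{p-1}w$ when $p>1$), and pass to the limit using $-\mu_u'\in L^1_{\mathrm{loc}}(0,\infty)$; the finiteness of $\|\nabla u^\bigstar_\mu\|_{p,\mu}$ obtained en route forces $u^\bigstar_\mu\in W^{1,p}_{\mathrm{loc}}$, whence the claimed local absolute continuity. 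This is also the only place where the local integrability of $w$ near the origin (from its continuity) and the compactness of $\spt u$ are genuinely used.
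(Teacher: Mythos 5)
The paper does not actually prove this statement: Theorem~\ref{thm:PS} is imported verbatim from the preprint [GLM23, Theorem~3.1], so there is no in-paper argument to compare against, and your proposal supplies a proof where the paper only gives a citation. Your route is the classical Talenti-type scheme --- coarea formula, H\"older on level sets, and the sharp weighted isoperimetric inequality of Cabr\'e--Ros-Oton--Serra for convex cones, which is exactly where the concavity of $w^{1/\al}$ and the $\al$-homogeneity enter --- and your bookkeeping of the constants is right: $\mu(B_r\cap\Sigma)=C_Dr^D$ and the relative weighted perimeter of $B_r\cap\Sigma$ is $DC_Dr^{D-1}=DC_D^{1/D}\mu(B_r\cap\Sigma)^{(D-1)/D}$, so balls about the vertex are the equality case. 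You also correctly isolate the two genuinely delicate points (the contribution of the critical set $\{\nabla u=0\}$ to $-\mu_u'$, since Sard is unavailable for $C^1$ functions when $d\ge2$, and the local absolute continuity of $u^*_\mu$, which must be derived from the slab estimates rather than assumed), and the truncation-and-limit remedy you sketch is the standard and correct one. Two minor imprecisions worth flagging: (i) for a.e.\ level $t$ one needs that the reduced boundary of $\{|u|>t\}$ agrees, up to an $\mathcal H^{d-1}$-null set, with $\{|u|=t\}\cap\{\nabla u\neq0\}$ (a Federer/Brothers--Ziemer-type lemma), which is what legitimizes applying H\"older only over the non-critical part of the level set; (ii) the final display should be an inequality rather than an equality, since $\mu_u$ may carry a singular part, so in general one only gets $\|\nabla u^\bigstar_\mu\|_{p,\mu}^p\le\int_0^\infty P_\mu(B_{r(t)}\cap\Sigma)^p\,(-\mu_u'(t))^{1-p}\dd t$ --- but that is precisely the direction you need, so neither point affects the validity of the strategy, only the care with which the limiting steps must be written out.
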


We need another obvious result.

\begin{lem}\label{RearrReduc}
Let $\Psi: (0,\infty)\to(0,\infty)$ be an increasing and measurable
function. Then
\begin{equation*}
  \int_{\Rde}\Psi\big(u_\mu^\bigstar(x)\big)\dd\mu(x)
  =
  \int_{\Rde}\Psi\big(|u(x)|\big)\dd\mu(x).
\end{equation*}
\end{lem}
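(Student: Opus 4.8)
The plan is to prove Lemma~\ref{RearrReduc} by reducing the claimed identity to the equimeasurability of $u$ and $u^\bigstar_\mu$ with respect to $\mu$, which has already been recorded in the Preliminaries. Concretely, I would first invoke the layer-cake (Cavalieri) representation: for any nonnegative measurable function $g$ on $\Rde$ and any increasing function $\Psi\colon(0,\infty)\to(0,\infty)$ one has
\begin{equation*}
  \int_{\Rde}\Psi\big(g(x)\big)\dd\mu(x)
  =
  \int_{0}^{\infty} \mu\big(\{x\in\Rde : \Psi(g(x))>\lambda\}\big)\dd\lambda
  + \Psi(0^+)\,\mu(\Rde),
\end{equation*}
applied with $g=u^\bigstar_\mu$ and with $g=|u|$ (after noting, as the excerpt already observes, that $u^\bigstar_\mu$ vanishes outside $\Sigma$ in the sense that it only sees values of $u$ on $\Sigma$, so all integrals effectively run over $\Sigma$). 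Since $\Psi$ is increasing, the superlevel set $\{\Psi\circ g>\lambda\}$ is, up to a $\lambda$-dependent threshold, exactly a superlevel set $\{g>\tau_\lambda\}$ of $g$ itself; more carefully, for fixed $\lambda$ the set $\{t>0:\Psi(t)>\lambda\}$ is an interval of the form $(\tau_\lambda,\infty)$ or $[\tau_\lambda,\infty)$, and in either case its $\mu$-measure pulled back through $g$ differs only on the single level $\{g=\tau_\lambda\}$, which is irrelevant after integrating in $\lambda$ (the set of $\lambda$ for which this matters is a null set, or one argues directly with the distribution function's right-continuity).

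The key step is then to apply equimeasurability: for every $\tau>0$,
\begin{equation*}
  \mu\big(\{x\in\Sigma : |u(x)|>\tau\}\big)
  =
  \mu\big(\{x\in\Sigma : u^\bigstar_\mu(x)>\tau\}\big),
\end{equation*}
which is precisely the displayed equimeasurability statement in the Preliminaries (and which in turn follows from the definition $f^\bigstar_\mu(x)=f^*_\mu(C_D|x|^D)$ together with $\mu(B_r\cap\Sigma)=C_D r^D$). Plugging this into the two layer-cake formulas makes the $\lambda$-integrands coincide for almost every $\lambda$, and the boundary terms $\Psi(0^+)\mu(\Rde)$ agree trivially (indeed one may as well restrict everything to $\Sigma$, or note both sides are identical regardless); hence the two integrals are equal, which is the assertion.

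I do not expect a serious obstacle here — the lemma is called "obvious" for good reason — but the one point that needs a little care is the handling of the threshold level $\{g=\tau_\lambda\}$ and of the value $\Psi(0^+)$: $\Psi$ is only assumed increasing (not strictly increasing, not continuous), so $\{t:\Psi(t)>\lambda\}$ may be a half-open interval and $\Psi$ need not be defined or finite at $0$. The clean way to dispatch this is to observe that the set of $\lambda>0$ for which $\{t:\Psi(t)>\lambda\}\neq(\tau_\lambda,\infty)$ is at most countable (it corresponds to jump discontinuities of the generalized inverse of $\Psi$), hence Lebesgue-null, so it does not affect the $\lambda$-integral; and since both sides of the claimed identity are integrals of the \emph{same} function $\Psi$ against equimeasurable data, any convention adopted for $\Psi$ near $0$ or on its jump set is applied identically on both sides and cancels. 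An alternative, perhaps even shorter, route is to write $u^\bigstar_\mu = u^*_\mu\circ \phi$ with $\phi(x)=C_D|x|^D$ a measure-preserving map from $(\Sigma,\mu)$ onto $((0,\infty),\text{Leb})$ onto $((0,\mu(\Sigma)),\text{Leb})$, reducing the integral over $\Rde$ to $\int_0^{\mu(\Sigma)}\Psi(u^*_\mu(t))\dd t$, and then use that $u^*_\mu$ is the nonincreasing rearrangement of $|u|$ with respect to $\mu$, so the standard one-dimensional identity $\int\Psi(f^*_\mu)=\int\Psi(|f|)\dd\mu$ applies; I would present whichever of these the surrounding text makes most economical.
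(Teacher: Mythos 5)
The paper gives no proof of Lemma~\ref{RearrReduc} at all --- it is labelled ``obvious'' and used directly --- so there is no argument of the authors to compare yours with; your layer-cake-plus-equimeasurability argument (or the alternative you sketch via the measure-preserving map $x\mapsto C_D|x|^D$ and the one-dimensional identity for $u^*_\mu$) is precisely the standard way to fill in this gap, and it is correct in substance. Two details in your write-up do need repair. First, the layer-cake formula you display is wrong as written: for a nonnegative measurable $h$ one has $\int_{\Rde} h\,\dd\mu=\int_0^\infty \mu\big(\{h>\lambda\}\big)\dd\lambda$ with no additional term, so your extra summand $\Psi(0^+)\,\mu(\Rde)$ double counts the contribution of small $\lambda$ (and, since $\mu(\Sigma)=\infty$ for the cone, it is infinite unless $\Psi(0^+)=0$); either drop it, or apply the layer-cake to $\Psi(g)-\Psi(0^+)$ on the set $\{g>0\}$, doing the same on both sides. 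Second, your claim that the set of $\lambda$ for which $\{t>0:\Psi(t)>\lambda\}$ is a closed half-line is at most countable is false: if $\Psi$ jumps at $t_0$, then every $\lambda\in[\Psi(t_0^-),\Psi(t_0))$ has this property, which is an interval of positive Lebesgue measure, so you cannot discard these $\lambda$ as a null set. The correct (and simpler) fix is the one you hint at in your closing remark: for such $\lambda$ the superlevel set of $\Psi\circ g$ is $\{g\ge\tau_\lambda\}$ for \emph{both} choices $g=|u|$ and $g=u^\bigstar_\mu$ simultaneously, and $\mu\big(\{|u|\ge\tau\}\big)=\mu\big(\{u^\bigstar_\mu\ge\tau\}\big)$ follows from the displayed equimeasurability by letting $\tau_n\uparrow\tau$; hence the $\lambda$-integrands coincide for every $\lambda$, and no exceptional-set argument is needed. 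With these two corrections your proof is complete and supplies exactly what the paper leaves unproved.
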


\subsection*{Reduction to the one-dimensional problem}

\begin{prop}\label{PropAAA}
Let $u\in \mathcal{C}_c^1(\Rde)$ be a nonnegative function with
$\spt u\subset\Om$ such that $\mu(\Om)<\infty$.
Assume that $0<a\le c_D$ and
$R>0$ is the number satisfying $\mu(B_R \cap \Sigma) = \mu(\Om \cap \Sigma)$.
Let~$u_\mu^\bigstar$ be the
radial rearrangement of $u$ with respect to $\mu$.
Set
\begin{equation}\label{fuU}
  U(|x|)=u^\bigstar(x),\quad x\in\Rde\cap\Sigma
\end{equation}
and
\begin{equation}\label{ustar03}
  \f(t)=C_D^{1/D'}\,U(Re^{-t/D}),\quad t\in(0,\infty).
\end{equation}
Then $\f$
satisfies that  $\f'\ge0$ on $(0,\infty)$
and  $\f(0)=0$.
Moreover,
\begin{equation*}
  \int_{\Om\cap\Sigma}|\nabla u(x)|^D\dd\mu(x)
  \ge
  \int_{B_R\cap\Sigma}|\nabla u^\bigstar_\mu(x)|^D\dd\mu(x)
  =
  \int_{0}^{\infty}\big(\f'(t)\big)^D\dd t
\end{equation*}
and
\begin{multline*}
  \frac{1}{\mu(\Om\cap\Sigma)}\int_{\Om\cap\Sigma}\exp\big(a(u(x))^{D'}\big)\dd\mu(x) \\
  =
  \frac{1}{\mu(B_R\cap\Sigma)}\int_{B_R\cap\Sigma}\exp\big(a(u^\bigstar_\mu(x))^{D'}\big)\dd\mu(x)
  =
  \int_{0}^{\infty}\exp\big(\be \f^{D'}(t)-t\big)\dd t,
\end{multline*}
where $\be=a/C_D\in(0,1]$.
\end{prop}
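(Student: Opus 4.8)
The plan is to strip off the radial symmetry of $u^{\bigstar}_{\mu}$, rewrite the two integrals as one-dimensional integrals in the radial variable via the polar-coordinate formula attached to $\mu$, and then perform the logarithmic substitution $\rho=Re^{-t/D}$. Write $U$ as in \eqref{fuU}, so by \eqref{SymmRearr} one has $U(\rho)=u^{*}_{\mu}(C_{D}\rho^{D})$ for $\rho>0$. First I would record from Theorem~\ref{thm:PS}, applied with $p=D=d+\al\in(1,\infty)$, that $u^{\bigstar}_{\mu}$ is locally absolutely continuous and (being a rearrangement) radially nonincreasing; hence $U$ is locally absolutely continuous and nonincreasing on $(0,\infty)$, and $\spt u^{\bigstar}_{\mu}=\overline{B}_{R_{0}}$ with $\mu(B_{R_{0}}\cap\Sigma)=\mu(\spt u\cap\Sigma)\le\mu(\Om\cap\Sigma)=\mu(B_{R}\cap\Sigma)$, so $R_{0}\le R$ and $U$ vanishes on $[R,\infty)$. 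In particular $\f(0)=C_{D}^{1/D'}U(R)=0$; and since $t\mapsto Re^{-t/D}$ is decreasing while $U$ is nonincreasing, $\f$ is nondecreasing and locally absolutely continuous with $\f'(t)=-\tfrac{1}{D}C_{D}^{1/D'}Re^{-t/D}\,U'(Re^{-t/D})\ge 0$ for a.e.\ $t>0$. This gives the first assertion.

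For the gradient line, the inequality is immediate from the P\'olya--Szeg\H{o} inequality \eqref{thm:PS:ineq} with $p=D$, once one observes that $\nabla u$ vanishes off the compact set $\spt u\subset\Om$ (so $\int_{\Rde}|\nabla u|^{D}\dd\mu=\int_{\Om\cap\Sigma}|\nabla u|^{D}\dd\mu$) and $\spt u^{\bigstar}_{\mu}=\overline{B}_{R_{0}}\subset\overline{B}_{R}$ (so $\int_{\Rde}|\nabla u^{\bigstar}_{\mu}|^{D}\dd\mu=\int_{B_{R}\cap\Sigma}|\nabla u^{\bigstar}_{\mu}|^{D}\dd\mu$). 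For the equality I would use that $|\nabla u^{\bigstar}_{\mu}(x)|=-U'(|x|)$ for a.e.\ $x$ together with the weighted polar-coordinate identity
\[
\int_{\Sigma}g(|x|)\dd\mu(x)=D\,C_{D}\int_{0}^{\infty}g(\rho)\,\rho^{D-1}\dd\rho ,
\]
which is simply the statement that the image of $\mu$ under $x\mapsto|x|$ is the measure $D\,C_{D}\rho^{D-1}\dd\rho$ on $(0,\infty)$ — itself a consequence of $\mu(B_{\rho}\cap\Sigma)=C_{D}\rho^{D}$ and the $\al$-homogeneity of $w$. Taking $g=(-U')^{D}$ and substituting $\rho=Re^{-t/D}$ (a decreasing $C^{\infty}$-diffeomorphism of $(0,\infty)$ onto $(0,R)$, with $\dd\rho/\rho=-\dd t/D$) turns $\int_{B_{R}\cap\Sigma}|\nabla u^{\bigstar}_{\mu}|^{D}\dd\mu$ into $\int_{0}^{\infty}(\f'(t))^{D}\dd t$ by a direct computation, the normalising constant $C_{D}^{1/D'}$ in \eqref{ustar03} being precisely what makes the accumulated powers of $D$ and $C_{D}$ from the polar measure and the Jacobian cancel.

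For the exponential line, the first equality I would obtain from equimeasurability. Put $\Psi_{0}(\tau)=\exp(a\tau^{D'})-1$, which is increasing and positive on $(0,\infty)$; since $u$ and $u^{\bigstar}_{\mu}$ are bounded with supports of finite $\mu$-measure, Lemma~\ref{RearrReduc} gives $\int_{\Rde}\Psi_{0}(u^{\bigstar}_{\mu})\dd\mu=\int_{\Rde}\Psi_{0}(u)\dd\mu$, both sides finite. As $\Psi_{0}(u)$ vanishes off $\spt u\subset\Om$ and $\Psi_{0}(u^{\bigstar}_{\mu})$ off $\spt u^{\bigstar}_{\mu}\subset\overline{B}_{R}$, adding the common number $\mu(\Om\cap\Sigma)=\mu(B_{R}\cap\Sigma)$ to each side turns this into $\int_{\Om\cap\Sigma}\exp(a\,u^{D'})\dd\mu=\int_{B_{R}\cap\Sigma}\exp\big(a\,(u^{\bigstar}_{\mu})^{D'}\big)\dd\mu$, and dividing by $\mu(\Om\cap\Sigma)=\mu(B_{R}\cap\Sigma)$ gives the first equality. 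For the second equality I would use the same polar-coordinate identity, now with $g=\exp(aU^{D'})$: the substitution $\rho=Re^{-t/D}$ sends $\int_{0}^{R}h(\rho)\,\rho^{D-1}\dd\rho$ to $(R^{D}/D)\int_{0}^{\infty}h(Re^{-t/D})e^{-t}\dd t$, and together with $\mu(B_{R}\cap\Sigma)=C_{D}R^{D}$ the prefactors collapse, leaving the exponent $a\,U(Re^{-t/D})^{D'}-t$. Finally \eqref{ustar03} gives $U(Re^{-t/D})^{D'}=C_{D}^{-1}\f(t)^{D'}$, hence $a\,U(Re^{-t/D})^{D'}=\be\,\f(t)^{D'}$ with $\be=a/C_{D}\in(0,1]$ (using $a\le c_{D}$), which produces $\int_{0}^{\infty}\exp\big(\be\,\f^{D'}(t)-t\big)\dd t$.

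I do not expect a genuinely hard step; the statement is a change-of-variables reduction. The part that needs the most care is the weighted polar-coordinate identity — correctly identifying the radial push-forward of $\mu$ and its normalisation $D\,C_{D}$ — since both the gradient equality and the second exponential equality rest on it, together with the closely related bookkeeping by which the constant $C_{D}^{1/D'}$ in \eqref{ustar03}, the chain rule between $U'$ and $\f'$, and the Jacobian of $\rho=Re^{-t/D}$ conspire so that no stray constant survives in front of $\int_{0}^{\infty}(\f'(t))^{D}\dd t$. The local absolute continuity and the a.e.\ identity $|\nabla u^{\bigstar}_{\mu}|=|U'(|\cdot|)|$ are handed to us by Theorem~\ref{thm:PS} and require no extra argument.
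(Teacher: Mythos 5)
Your overall route is the same as the paper's: Theorem~\ref{thm:PS} for the inequality and the support information, equimeasurability (Lemma~\ref{RearrReduc}) for the first exponential equality, the radial push-forward $D\,C_D\,\rho^{D-1}\dd\rho$ of $\mu$ under $x\mapsto|x|$, and the logarithmic substitution $\rho=Re^{-t/D}$. Your first paragraph and your exponential line are fine; in fact your variant with $\Psi_0(\tau)=\exp(a\tau^{D'})-1$ is a more careful application of Lemma~\ref{RearrReduc} than a direct use of $\Psi(\tau)=\exp(a\tau^{D'})$, and there the prefactors really do collapse, yielding the exponent $\be\f^{D'}(t)-t$ with $\be=a/C_D$.

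The genuine gap sits precisely in the step you defer to ``a direct computation'': for the Dirichlet integral the constants do \emph{not} cancel with the normalisation $C_D^{1/D'}$ of \eqref{ustar03}. Carrying the computation out: $P_w(B_1\cap\Sigma)=D\,C_D$, so $\int_{B_R\cap\Sigma}|\nabla u^\bigstar_\mu|^D\dd\mu=D\,C_D\int_0^R|U'(\rho)|^D\rho^{D-1}\dd\rho$; the substitution $\rho=Re^{-t/D}$ (with $\rho^{D-1}\dd\rho=\rho^{D}\,\dd t/D$ up to orientation) turns this into $C_D\int_0^\infty|U'(\rho)\rho|^D\dd t$, and since $\f'(t)=-D^{-1}C_D^{1/D'}U'(\rho)\rho$ one obtains $\int_{B_R\cap\Sigma}|\nabla u^\bigstar_\mu|^D\dd\mu=D^{D}C_D^{\,2-D}\int_0^\infty(\f'(t))^D\dd t$, and the factor $D^{D}C_D^{\,2-D}$ equals $1$ only in the exceptional case $C_D^{\,D-2}=D^{D}$ (e.g.\ for $w(x)=x_1$ on a half-plane in $\Re^2$ it is $27\cdot(2/3)^{-1}$). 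The normalisation that makes the gradient identity clean is $\f(t)=D\,C_D^{1/D}\,U(Re^{-t/D})$, the analogue of Moser's $n^{1/n'}\omega_{n-1}^{1/n}u^*$; with that choice the exponential line comes out with exponent $\tfrac{a}{D(DC_D)^{1/(D-1)}}\f^{D'}(t)-t$, i.e.\ $\be=a/\big(D\,P_w(B_1\cap\Sigma)^{1/(D-1)}\big)$ and the natural restriction $a\le D\,P_w(B_1\cap\Sigma)^{1/(D-1)}$ (the weighted Moser constant), not $\be=a/C_D$. So with the stated $\f$ and $\be$ the two displayed identities cannot both hold; one must adjust either the normalisation in \eqref{ustar03} or the constant $\be$ and the admissible range of $a$. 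To be fair, the paper's own proof makes the same unchecked assertion in its last display (that $P_w(B_1\cap\Sigma)D^{-1}(D^{-1}C_D^{1/D'})^{-D}=1$), so the correction concerns the statement as well; but the point for your write-up is that by outsourcing the bookkeeping you skipped exactly the step that fails, and this bookkeeping is the entire mathematical content of the gradient identity.
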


\begin{proof}
By Lemma~\ref{RearrReduc} with $\Psi(t)=\exp(a t^{D'})$ we have
\begin{equation}\label{ustar01}
  \frac{1}{\mu(\Om\cap\Sigma)}
  \int_{\Om\cap\Sigma}\exp\big(a|u(x)|^{D'}\big)\dd\mu(x)
  =
  \frac{1}{\mu\big(B_R\cap\Sigma\big)}
  \int_{B_R\cap\Sigma}\exp\big(a (u^\bigstar(x))^{D'}\big)\dd\mu(x).
\end{equation}
Since $u^\bigstar$ is radially symmetric, we obtain
%, using also~\eqref{MonSurf},
\begin{multline*}
  \int_{B_R\cap\Sigma}\exp\big(a (u^\bigstar(x))^{D'}\big)\dd\mu(x)
  =
  \int_{0}^{R}\Big(\int_{x\in\Sigma, |x|=r}\exp\big(a(u^\bigstar(x))^{D'}\big)w(x)\dd\sg(x)\Big)\dd r \\
  =
  \int_{0}^{R}\exp\big(a(U(r))^{D'}\big)\Big(\int_{x\in\Sigma, |x|=r}w(x)\dd\sg(x)\Big)\dd r
  =
  P_w(B_1\cap\Sigma)\int_{0}^{R}\exp\big(a(U(r))^{D'}\big)\,r^{D-1}\dd r,
\end{multline*}
where $U(|x|)=u^\bigstar(x)$ and $ P_w(B_1\cap\Sigma)=\int_{x\in\Sigma, |x|=1}w(x)\dd\sg(x)$
($\sg$ is the Hausdorff measure on the unit sphere centered at the origin).
We make the change of  variables
\begin{equation} \label{ChgVAR}
  r=Re^{-t/D}
\end{equation}
in the last integral to obtain
\begin{equation*}
  \int_{0}^{R}\exp\big(a(U(r))^{D'}\big)\,r^{D-1}\dd r
  =
  D^{-1}R^D\int_{0}^{\infty}\exp\big(a(U(Re^{-t/D}))^{D'}-t\big)\dd t.
\end{equation*}
Setting $\be=a/C_D\in(0,1]$ and using the fact, that
\begin{equation*}
  C_D
  =
  \mu\big(B_R\cap\Sigma\big)
  =\int_{B_R\cap\Sigma}w(x)\dd x=D^{-1} P_w(B_1\cap\Sigma)R^D,
\end{equation*}
we arrive at
\begin{equation} \label{ustar02}
  \frac{1}{\mu\big(B_R\cap\Sigma\big)}
  \int_{B_R\cap\Sigma}\exp\big(a(u^\bigstar(x))^{D'}\big)\dd\mu(x)
  =
  \int_{0}^{\infty}\exp\big(\be(\f(t))^{D'}-t\big)\dd t,
\end{equation}
where the function $\f$ is defined by \eqref{ustar03}.
Similarly, since $\big|\nabla u^\bigstar(x)\big|=\big|U'\big(|x|\big)\big|$,
$x\in\Rde\cap\Sigma$, we have
\begin{equation*}
  \int_{B_R\cap\Sigma}\big|\nabla u^\bigstar(x)\big|^D\dd\mu(x)
  =
  P_w(B_1\cap\Sigma)\int_{0}^{R}\big|U'(r)\big|^D\,r^{D-1}\dd r.
\end{equation*}
Differentiating~\eqref{ustar03} we have, for all $t\in(0,\infty)$,
\begin{equation*}
  \f'(t)=-D^{-1}C_D^{1/D'}\,U'(Re^{-t/D})Re^{-t/D}
  =
  -D^{-1}C_D^{1/D'}\,U'(r)r.
\end{equation*}
Observe that, since $U$ is nonincreasing, the right hand side is nonnegative.
Change of variables \eqref{ChgVAR}  gives
\begin{multline*}
  \int_{0}^{R}\big|U'(r)\big|^D\,r^{D-1}\dd r
  =
  D^{-1}(D^{-1}C_D^{1/D'})^{-D}\,\int_{0}^{\infty}\big|D^{-1}C_D^{1/D'}
  \,U'(Re^{-t/D})Re^{-t/D}\big|^D\dd t \\
  =
  D^{-1}(D^{-1}C_D^{1/D'})^{-D}\,\int_{0}^{\infty}|\f'(t)|^D\dd t.
\end{multline*}
That is,
\begin{equation*}
   \int_{B_R\cap\Sigma}\big|\nabla u^\bigstar(x)\big|^D\dd\mu(x)
   =
   P_w(B_1\cap\Sigma)D^{-1}(D^{-1}C_D^{1/D'})^{-D}\,\int_{0}^{\infty}|\f'(t)|^D\dd t
   =
   \int_{0}^{\infty}|\f'(t)|^D\dd t.
 \end{equation*}
 The proof is completed.
 \end{proof}

\begin{lem}\label{L01}
Let $q\in(1,\infty)$. Consider nonnegative
functions $\f\in C^1[0,\infty)$ such that $\f(0)=0$, $\f'\ge0$ on $(0,\infty)$
and $\int_{0}^{\infty}(\f'(t))^q\dd t\le1$. Then
\begin{equation}\label{MoserSup}
  A(q)=\sup_{\f}\int_{0}^{\infty}\exp\big((\f(t))^{q'}-t\big)\dd t<\infty.
\end{equation}
\end{lem}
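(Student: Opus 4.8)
The plan is to follow the classical one‑dimensional argument underlying all Trudinger–Moser inequalities. Put $F(t):=\int_0^t(\f'(s))^q\,\dd s$, which is nondecreasing with $F(0)=0$ and $F(\infty)\le1$. By Hölder's inequality, for every admissible $\f$ and every $t>0$,
\[
  \f(t)=\int_0^t\f'(s)\,\dd s\le F(t)^{1/q}\,t^{1/q'}\le t^{1/q'},
\]
so that $\f(t)^{q'}\le F(t)^{q'-1}t\le t$; in particular the integrand $\exp(\f(t)^{q'}-t)$ never exceeds $1$, and the only issue is integrability near $+\infty$. I would then fix a small parameter $\delta=\delta(q)\in(0,1)$, to be chosen at the very end, and distinguish two cases. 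If $F(\infty)\le\delta$, then $\f(t)^{q'}\le\delta^{q'-1}t$ for all $t$, hence $\exp(\f(t)^{q'}-t)\le\exp(-(1-\delta^{q'-1})t)$ and the integral is $\le(1-\delta^{q'-1})^{-1}$. Otherwise let $t_0:=\inf\{t:\int_t^\infty(\f')^q\,\dd s\le\delta\}\in(0,\infty)$; then $\int_{t_0}^\infty(\f')^q\,\dd s=\delta$ and $F(t_0)\le1-\delta$, so on $[0,t_0]$ we still have $\f(t)^{q'}\le(1-\delta)^{q'-1}t$ and therefore $\int_0^{t_0}\exp(\f^{q'}-t)\,\dd t\le(1-(1-\delta)^{q'-1})^{-1}$.

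The substance of the proof is the estimate of $\int_{t_0}^\infty\exp(\f^{q'}-t)\,\dd t$. For $t\ge t_0$, writing $u=t-t_0$, only $\delta$ units of energy remain, which gives the refined growth bound
\[
  \f(t)\le\f(t_0)+\Big(\int_{t_0}^t(\f')^q\,\dd s\Big)^{1/q}u^{1/q'}\le\f(t_0)+\delta^{1/q}u^{1/q'},
  \qquad \f(t_0)^{q'}\le(1-\delta)^{q'-1}t_0 .
\]
Raising the first bound to the power $q'$ by means of the sharp inequality $(a+b)^{q'}\le a^{q'}+q'\,b\,(a+b)^{q'-1}$ — whose key feature is that the coefficient of $a^{q'}$ is exactly $1$ — one obtains
\[
  \f(t)^{q'}-t\le\big[\f(t_0)^{q'}-t_0\big]-u+q'\delta^{1/q}u^{1/q'}\big(\f(t_0)+\delta^{1/q}u^{1/q'}\big)^{q'-1}.
\]
The bracket is $\le-\big(1-(1-\delta)^{q'-1}\big)t_0$, a strictly negative term proportional to $t_0$, while the last term is sublinear in $u$; its integral against $\exp(-u)$ is controlled by Young's inequality together with a Laplace‑type estimate, namely that there is $C_0=C_0(q)$ with $\int_0^\infty\exp(Ku^{1/q'}-u)\,\dd u\le C_0\,(1+K^{q/2})\exp\!\big(K^q/(q(q')^{q-1})\big)$ for all $K>0$. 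Carrying this out yields a bound of the shape $\int_{t_0}^\infty\exp(\f^{q'}-t)\,\dd t\le C(q)\,(1+t_0^{\,p(q)})\exp(\kappa(q,\delta)\,t_0)$, where $\kappa(q,\delta)$ encodes the competition between the gain $1-(1-\delta)^{q'-1}$ from the bracket and the loss produced by the cross term after applying Young and Laplace.

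The main obstacle is precisely to show that $\delta$ can be chosen, depending only on $q$, so that $\kappa(q,\delta)\le0$: this is what makes the $[t_0,\infty)$‑contribution bounded uniformly in $\f$ (in fact it then tends to $0$ as $t_0\to\infty$). This step forces careful bookkeeping of constants; in particular one must avoid the wasteful convexity inequality $(a+b)^{q'}\le\lambda^{1-q'}a^{q'}+(1-\lambda)^{1-q'}b^{q'}$, since its coefficient $\lambda^{1-q'}$ of $a^{q'}$ always exceeds $1$ and then overwhelms the gain proportional to $t_0$, whereas the tangent‑line inequality above together with the sharp form of the one‑dimensional Laplace bound is exactly what makes the accounting close. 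Once the three ranges $[0,t_0]$ (or all of $[0,\infty)$ in the first case) and $[t_0,\infty)$ are combined, one gets $A(q)\le C(q)<\infty$ with an explicit constant depending only on $q$.
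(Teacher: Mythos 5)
First, note that the paper does not actually prove this lemma: it simply cites Moser (for $q\ge 2$) and Jodeit (for $1<q<2$). Your attempt at a self-contained proof is therefore a genuinely different route, but it has a fatal gap at exactly the step you flag as the main obstacle: the parameter $\delta$ \emph{cannot} be chosen so that $\kappa(q,\delta)\le 0$. The reason is structural. Every estimate in your tail argument passes through the pointwise envelope $\varphi(t)\le E(t):=\varphi(t_0)+\delta^{1/q}(t-t_0)^{1/q'}$, so your final bound must dominate $\int_{t_0}^{\infty}\exp\big(E(t)^{q'}-t\big)\,\dd t$ — and this quantity is itself unbounded in $t_0$. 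Take the worst admissible data $\varphi(t_0)^{q'}=(1-\delta)^{q'-1}t_0$ (the truncated Moser sequence). By H\"older's inequality for sums, $E(t)^{q'}\le t$ with \emph{equality} at $u_*:=t-t_0=\delta t_0/(1-\delta)$, and after substituting $u=t_0v$ one gets
\begin{equation*}
  \int_{t_0}^{\infty}\exp\big(E(t)^{q'}-t\big)\,\dd t
  = t_0\int_0^{\infty}e^{t_0 G(v)}\,\dd v,
  \qquad
  G(v)=\big((1-\delta)^{1/q}+\delta^{1/q}v^{1/q'}\big)^{q'}-1-v\le 0,
\end{equation*}
where $G(v_*)=0$ and $G''(v_*)<0$; Laplace's method gives $\asymp\sqrt{t_0}$ as $t_0\to\infty$. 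Hence a bound of the shape $C(q)(1+t_0^{p(q)})e^{\kappa t_0}$ forces $\kappa\ge 0$, and with $\kappa=0$ the polynomial prefactor leaves the estimate useless. This is not a matter of wasteful inequalities: for $q=q'=2$ one can expand \emph{exactly}, $(a+b)^2-t_0-u=(a^2-t_0)+2ab+(b^2-u)$ with $b^2=\delta u$, and the supremum over $u$ of $2\delta^{1/2}au^{1/2}-(1-\delta)u$ equals $\delta a^2/(1-\delta)\le\delta t_0$, which exactly cancels the gain $a^2-t_0\le-\delta t_0$; the peak sits at $u_*\sim\delta t_0$ with width $\sim\sqrt{\delta t_0}$, so the tail integral of the envelope is $\gtrsim\sqrt{\delta t_0}$. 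Tracking your own scheme confirms this: for $q=2$ one finds $\kappa=\delta^2/(1-2\delta)>0$ for every admissible $\delta$, and for general $q$ the Laplace factor is $\exp\big((q'-1)\delta(1+o(1))t_0\big)$ against a gain of only $\epsilon_\delta t_0$ with $\epsilon_\delta=1-(1-\delta)^{q'-1}\le(q'-1)\delta$ — the competition is at best tied at every order in $\delta$.

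The underlying problem is that the envelope $E$ is attained at each fixed $t$ by a \emph{different} admissible $\varphi$, so integrating $\exp(E^{q'}-t)$ amounts to interchanging $\sup_\varphi$ with $\int_{t_0}^\infty$, which is precisely what the lemma forbids. Your opening reductions are all correct (the H\"older bound $\varphi(t)^{q'}\le F(t)^{q'-1}t$, the case $F(\infty)\le\delta$, the estimate on $[0,t_0]$, the tangent-line inequality, and the stated Laplace bound with maximum $K^q/(q(q')^{q-1})$), but the tail needs a different idea that uses the constraint $\int_0^\infty(\varphi')^q\,\dd t\le 1$ at many points of the tail simultaneously. The classical route (Moser's Lemma~1, extended by Jodeit, and Adams' abstract version) establishes the level-set bound $\big|\{t\ge0:\varphi(t)^{q'}-t\ge-\tau\}\big|\le a(q)+b(q)\tau$ and concludes via $\int_0^\infty e^{\varphi^{q'}-t}\,\dd t=\int_0^\infty e^{-\tau}\big|\{\varphi^{q'}-t>-\tau\}\big|\,\dd\tau\le a(q)+b(q)$. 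Either carry out such a level-set argument or, as the authors do, simply quote Moser and Jodeit.
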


\begin{proof}
The proof for $q\ge2$ was given by Moser~\cite{Moser71},
for $q\in(1,2)$ it was extended by Jodeit~\cite{Jod}.
\end{proof}

\begin{lem}\label{L02}
The supremum in \eqref{MoserSup} is attained for every $q>1$.
\end{lem}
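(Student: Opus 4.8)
The strategy is a direct method in the calculus of variations applied to the one-dimensional functional
$J(\f)=\int_0^\infty\exp\big(\f(t)^{q'}-t\big)\,\dd t$
over the constraint set
$\mathcal{K}=\{\f\in C^1[0,\infty):\f\ge0,\ \f(0)=0,\ \f'\ge0,\ \int_0^\infty(\f')^q\le1\}$.
First I would take a maximizing sequence $(\f_n)\subset\mathcal{K}$, so $J(\f_n)\to A(q)$. Since $\f_n(t)=\int_0^t\f_n'$ and $\|\f_n'\|_q\le1$, Hölder gives the pointwise bound $\f_n(t)\le t^{1/q'}$, and more importantly the family $\{\f_n'\}$ is bounded in $L^q(0,\infty)$, hence has a subsequence converging weakly in $L^q$ to some $g\ge0$ with $\|g\|_q\le1$ (weak lower semicontinuity of the norm). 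Define $\f(t)=\int_0^t g(s)\,\dd s$; then $\f$ is nondecreasing, $\f(0)=0$, absolutely continuous, and $\int_0^\infty(\f')^q\le1$. Weak $L^q$ convergence of $\f_n'$ tested against $\mathds{1}_{(0,t)}\in L^{q'}(0,t)$ gives $\f_n(t)\to\f(t)$ for every fixed $t$, and the uniform bound $\f_n(t)\le t^{1/q'}$ upgrades this (via Hölder on $\f_n(t)-\f_n(s)$) to uniform convergence on compact subsets of $[0,\infty)$.

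The next step is to pass to the limit in $J$. On any finite interval $(0,T)$ the integrand $\exp(\f_n^{q'}-t)$ converges uniformly to $\exp(\f^{q'}-t)$, so $\int_0^T\exp(\f_n^{q'}-t)\,\dd t\to\int_0^T\exp(\f^{q'}-t)\,\dd t$. For the tail, I use $\f_n(t)^{q'}\le t$ (this is exactly the Hölder bound $\f_n(t)\le t^{1/q'}$), which makes the integrand $\le e^{0}=1$ on no interval — rather, one needs a genuinely integrable tail dominator. The clean way: write $J(\f_n)=\int_0^T+\int_T^\infty$; on $(T,\infty)$ split $\f_n'$ into its mass on $(0,T)$ and on $(T,\infty)$, i.e. for $t>T$, $\f_n(t)=\f_n(T)+\int_T^t\f_n'\le T^{1/q'}+\big(\int_T^\infty(\f_n')^q\big)^{1/q'}(t-T)^{1/q'}$; calling $\delta_n^{q}=\int_T^\infty(\f_n')^q\le1$, an elementary convexity estimate of the form $(a+b)^{q'}\le a^{q'}+C_q(a^{q'-1}b+b^{q'})$ yields $\f_n(t)^{q'}-t\le -(1-\delta_n^{q'})(t-T)+o(t)$, and since we may choose $T$ large so that $\delta_n^{q'}$ is bounded away from $1$ uniformly in $n$ (because the "bulk" of the $L^q$-mass of a near-maximizer must sit in bounded $t$ — this needs the explicit finiteness $A(q)<\infty$ from Lemma~\ref{L01}), the tails $\int_T^\infty\exp(\f_n^{q'}-t)\,\dd t$ are uniformly small. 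Hence $J(\f_n)\to J(\f)$, so $J(\f)=A(q)$ and, since $J(\f)\le A(q)$ trivially and $\f\in\overline{\mathcal K}$, the supremum is attained.

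Two technical points remain. One must check the extremal $\f$ may be taken in $C^1[0,\infty)$ and not merely absolutely continuous: this follows from the Euler–Lagrange equation — at a maximizer the constraint $\|\f'\|_q=1$ is active (otherwise scaling $\f\mapsto\lambda\f$ with $\lambda>1$ strictly increases $J$), and the variational equation forces $(\f')^{q-1}$ to equal, up to a Lagrange multiplier, $\int_t^\infty q'\f^{q'-1}\exp(\f^{q'}-s)\,\dd s$, whose right-hand side is continuous (indeed $C^1$) in $t$, so $\f'$ is continuous and a bootstrap gives $\f\in C^1$, even $C^\infty$, on $[0,\infty)$. The other point is justifying that a maximizing sequence can be chosen with its $L^q$-derivative-mass uniformly concentrated on a fixed bounded interval; this is where I expect the \textbf{main obstacle} to lie, since a priori mass could escape to $t=\infty$. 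The resolution is quantitative: if $\delta_n=\big(\int_{T}^\infty(\f_n')^q\big)^{1/q}$ did not tend to zero as $T\to\infty$ uniformly, one shows the contribution of such a sequence to $J$ is no larger than $A(q)$ times a factor strictly less than $1$, contradicting $J(\f_n)\to A(q)>0$; concretely, truncating $\f_n$ at level $\f_n(T)$ and rescaling produces a competitor with at least the same value but strictly smaller tail, and iterating/optimizing over $T$ forces concentration. Once concentration is secured, the weak-limit argument above closes the proof.
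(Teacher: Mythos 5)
The paper does not prove Lemma~\ref{L02} directly: it simply cites Carleson--Chang (for integer $q\ge2$) and Hudson--Leckband (for real $q>1$), so your attempt at a self-contained direct proof is a genuinely different route --- but it has a gap at exactly the point where those papers do all their work. The direct method fails here because of loss of compactness at $t=\infty$: a maximizing sequence may have its derivative mass escape (the analogue of concentration at a point in Moser's setting). For such a sequence the weak limit is $\f\equiv0$ with $J(0)=\int_0^\infty e^{-t}\,\dd t=1$, while $J(\f_n)$ need not tend to $1$; e.g.\ for $\f_n(t)=n^{-1/q}t$ on $[0,n]$, $\f_n(t)=n^{1/q'}$ beyond, one has $\|\f_n'\|_q=1$, the tail $\int_n^\infty\exp(\f_n^{q'}-t)\,\dd t=1$ does not become small, and $\liminf J(\f_n)>1$. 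So your two key assertions --- that the tails $\int_T^\infty\exp(\f_n^{q'}-t)\,\dd t$ are uniformly small for a maximizing sequence, and that ``the bulk of the $L^q$-mass of a near-maximizer must sit in bounded $t$'' because $A(q)<\infty$ --- are precisely the statements that need proof; they do not follow from finiteness of $A(q)$, and the proposed fix (``truncate and rescale to get a competitor with strictly smaller tail, so the value drops by a factor strictly less than $1$'') is not substantiated: concentrating sequences can have values strictly larger than $J$ of their weak limit, so nothing elementary forces their values below $A(q)$.

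What the actual proofs do is quantitative and unavoidable in some form: Carleson--Chang show that along any concentrating sequence $\limsup_n J(\f_n)$ is bounded by an explicit constant (of the form $1+e^{1+\frac12+\cdots+\frac1{q-1}}$ in the integer case; Hudson--Leckband obtain the analogue for real $q>1$), and then they exhibit an explicit test function whose value strictly exceeds that threshold. Only this comparison rules out concentration, after which the weak-convergence and local-uniform-convergence arguments you describe (which are fine as far as they go, including the Euler--Lagrange regularity remark) close the proof. Without the sharp concentration bound and the competing test function, your argument establishes attainment only under the unproven hypothesis that some maximizing sequence does not concentrate. If you want a self-contained proof you must reproduce that analysis; otherwise the honest course is the paper's: invoke Carleson--Chang and Hudson--Leckband.
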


\begin{proof}
The result for $q\in\Ne$, $q\ge2$ was  proved first by
by Carleson and Chang~\cite{CaCh}, it was extended to real $q>1$ by Hudson and Leckband~\cite{HuLe02}.
\end{proof}

\section{Main result}

Now we formulate the main theorem.

\begin{thm}
Assume that $p\in[1,\infty)$ and $w$ is an $\al$-homogeneous weight such that $\al>0$.
Then
\begin{equation*}
  S=\sup_u\frac{1}{\mu(\Om\cap\Sigma)}
  \int_{\Om\cap\Sigma}\exp\big(a|u(x)|^{D'}\big)\dd\mu(x) <\infty,
\end{equation*}
where the supremum is taken over all $u\in \mathcal{C}_c^1(\Rde)$ such that
$\spt u\subset\Om$ with $\mu(\Om)<\infty$ and $\|\nabla u\|_{p,\mu}\le1$.
If $\Om=B_R$ with some $R>0$, then the supremum is attained.
\end{thm}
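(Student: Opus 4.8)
The plan is to reduce everything, via Proposition~\ref{PropAAA}, to the one-dimensional Moser functional of Lemma~\ref{L01}: its finiteness yields the bound for $S$, and (when $\Om=B_R$) its extremal, which exists by Lemma~\ref{L02}, will be pulled back to an extremal for $S$. Throughout we take the critical exponent $p=D$ (the only value for which $S$ can be finite), write $q=D$, $q'=D'$, and let $\be=a/C_D\in(0,1]$ be as in Proposition~\ref{PropAAA}. For the finiteness: let $u\in\mathcal C_c^1(\Rde)$ be admissible; since the functional and the constraint depend only on $|u|$ and $|u|$ is approximable in $\mathcal C_c^1$ (e.g.\ by $\sqrt{u^2+\ep^2}-\ep$), we may assume $u\ge0$. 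By Theorem~\ref{thm:PS} the radial rearrangement $u^\bigstar_\mu$ satisfies $\|\nabla u^\bigstar_\mu\|_{D,\mu}\le\|\nabla u\|_{D,\mu}\le1$ and is supported in $\overline B_R$ with $\mu(B_R\cap\Sigma)\le\mu(\Om\cap\Sigma)$, while Lemma~\ref{RearrReduc} leaves the averaged exponential integral unchanged. Feeding $u^\bigstar_\mu$ into the computation of Proposition~\ref{PropAAA} produces $\f$ with $\f(0)=0$, $\f'\ge0$, $\int_0^\infty(\f')^D\,\dd t\le1$ and
\[
  \frac{1}{\mu(\Om\cap\Sigma)}\int_{\Om\cap\Sigma}\exp\!\big(a|u|^{D'}\big)\,\dd\mu
  =\int_0^\infty\exp\!\big(\be\f^{D'}(t)-t\big)\,\dd t
  \le\int_0^\infty\exp\!\big(\f^{D'}(t)-t\big)\,\dd t\le A(D),
\]
using $\be\le1$, $\f\ge0$ and Lemma~\ref{L01}; hence $S\le A(D)<\infty$.

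Now assume $\Om=B_R$. On the ball the reduction of Proposition~\ref{PropAAA} is \emph{onto} the class of admissible one-dimensional profiles, so I expect $S=A(D)$ and plan to realise the one-dimensional extremal. Consider the critical case $\be=1$; the subcritical case $\be<1$ is analogous (and also follows from the compactness of the embedding into a strictly larger Orlicz space proved in \cite{GuHa21}). Let $\f_0$ be an extremal for $A(D)$ from Lemma~\ref{L02}; one first checks $\int_0^\infty(\f_0')^D\,\dd t=1$, since otherwise a dilation $\la\f_0$ with $\la>1$ would strictly increase $\int_0^\infty\exp(\f^{D'}-t)\,\dd t$. Inverting \eqref{ChgVAR}--\eqref{ustar03}, let $u_0$ be the radially nonincreasing function given by $u_0(x)=C_D^{-1/D'}\f_0\big(D\log(R/|x|)\big)$ for $0<|x|<R$ and $u_0\equiv0$ off $B_R$; then $u_0\to0$ at $\partial B_R$. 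Reading Proposition~\ref{PropAAA} backwards gives $\|\nabla u_0\|_{D,\mu}=1$ and $\frac{1}{\mu(B_R\cap\Sigma)}\int_{B_R\cap\Sigma}\exp(a\,u_0^{D'})\,\dd\mu=\int_0^\infty\exp(\f_0^{D'}-t)\,\dd t=A(D)$.

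It remains to place $u_0$ in the natural space $W^{1,D}_0(B_R,\mu)=\overline{\mathcal C_c^1(B_R)}^{\,\|\nabla\cdot\|_{D,\mu}}$ and to conclude that $S$ is attained. I would approximate $u_0$ in three moves: truncate, $u_0^{(n)}=\min(u_0,n)$, to remove the (at worst logarithmically mild) singularity at the origin, noting $u_0^{(n)}\to u_0$ in $W^{1,D}(\mu)$ with $\|\nabla u_0^{(n)}\|_{D,\mu}\le1$; multiply by a radial cut-off $\eta_\ep$ equal to $1$ on $B_{(1-2\ep)R}$ and $0$ off $B_{(1-\ep)R}$—here the key point is that $\int_0^\infty(\f_0')^D<\infty$ forces $\f_0(s)=o(s^{1/D'})$ as $s\to0^+$, hence $u_0(x)=o\big((\log(R/|x|))^{1/D'}\big)$ near $\partial B_R$, which makes $u_0^{(n)}\nabla\eta_\ep\to0$ in $L^D(\mu)$ as $\ep\to0$; and mollify, which converges in $L^D(\mu)$ on the relevant compact set since $w$ is continuous, hence bounded there. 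A diagonal choice yields $v_k\in\mathcal C_c^\infty(B_R)$ with $\nabla v_k\to\nabla u_0$ in $L^D(\mu)$ and $v_k\to u_0$ in $L^D(\mu)$ and a.e.\ along a subsequence; after replacing $v_k$ by $v_k/\max(1,\|\nabla v_k\|_{D,\mu})$ these are admissible competitors, so $S\ge\frac{1}{\mu(B_R\cap\Sigma)}\int_{B_R\cap\Sigma}\exp(a|v_k|^{D'})\,\dd\mu$ for every $k$, and Fatou's lemma gives $S\ge\frac{1}{\mu(B_R\cap\Sigma)}\int_{B_R\cap\Sigma}\exp(a\,u_0^{D'})\,\dd\mu=A(D)$. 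Together with the first part this yields $S=A(D)$, attained by $u_0$.

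The hard part is this last transfer: turning the abstract one-dimensional extremal of Lemma~\ref{L02} into an admissible function of the weighted problem—i.e.\ the cut-off/density argument (the boundary estimate $\f_0(s)=o(s^{1/D'})$ is precisely what makes it go through) together with the lower semicontinuity of the exponential functional, which Fatou supplies. One should also state plainly that the extremal generally lives in $W^{1,D}_0(B_R,\mu)$, the natural completion on which the functional is defined, rather than in $\mathcal C_c^1$ itself; when $\f_0$ turns out to be bounded—as a Carleson--Chang-type inspection of the one-dimensional Euler--Lagrange equation suggests—$u_0$ is a bounded function, smooth in $B_R\setminus\{0\}$ and vanishing on $\partial B_R$.
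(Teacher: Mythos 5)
Your proposal is correct and follows essentially the same route as the paper: reduce to the one-dimensional Moser functional via Proposition~\ref{PropAAA}, invoke Lemma~\ref{L01} for finiteness, and pull back the extremal of Lemma~\ref{L02} with $q=D$ through the change of variables \eqref{ustar03} to attain the supremum on the ball. The only difference is that you additionally supply the truncation/cut-off/mollification and Fatou argument showing the pulled-back profile $u_0$ is a legitimate limit of admissible competitors (and note it lives in $W^{1,D}_0(B_R,\mu)$ rather than $\mathcal{C}_c^1$), a point the paper's one-line attainment step leaves implicit.
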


\begin{proof}
To prove that $S$ is finite we apply Proposition~\ref{PropAAA} to obtain
\begin{equation*}
  1\ge\int_{\Om\cap\Sigma}|\nabla u(x)|^D\dd\mu(x)
  \ge
  \int_{0}^{\infty}\big(\f'(t)\big)^D\dd t,
\end{equation*}
and so, by Lemma~\ref{L01} and Proposition~\ref{PropAAA},
\begin{equation*}
  \sup_u\frac{1}{\mu(\Om\cap\Sigma)}
  \int_{\Om\cap\Sigma}\exp\big(a|u(x)|^{D'}\big)\dd\mu(x)
  =
  \sup_\f\int_{0}^{\infty}\exp\big((\f(t))^{D'}-t\big)\dd t<\infty.
\end{equation*}
For proving the second part it is enough to take the function
\begin{equation*}
  u(x)=U(|x|), \quad x\in B_R\cap\Sigma,
\end{equation*}
where $U$ is the function such that \eqref{ustar03} holds
and $\f$ is the extremal function from Lemma~\ref{L02} when $q=D$.
\end{proof}

%\bibliography{ExtrFuBB}

\end{document}